\newtheorem{theorem}{Theorem}[section]
\newtheorem{lemma}[theorem]{Lemma}
\newtheorem{remark}[theorem]{Remark}
\newtheorem{proposition}[theorem]{Proposition}
\newtheorem{corollary}[theorem]{Corollary}
\newtheorem{definition}[theorem]{Definition}
\newtheorem{example}[theorem]{Example}
\newcommand\be{\begin{equation}}
\newcommand\ee{\end{equation}}
\newcommand\bn{\begin{eqnarray}}
\newcommand\en{\end{eqnarray}}
\newcommand\bns{\begin{eqnarray*}}
\newcommand\ens{\end{eqnarray*}}
\newcommand\bd{\begin{definition}}
\newcommand\ed{\end{definition}}
\newcommand\br{\begin{remark}}
\newcommand\er{\end{remark}}
\newcommand\bt{\begin{theorem}}
\newcommand\et{\end{theorem}}
\newcommand\bp{\begin{proposition}}
\newcommand\ep{\end{proposition}}
\newcommand\bc{\begin{corollary}}
\newcommand\ec{\end{corollary}}
\newcommand\bl{\begin{lemma}}
\newcommand\el{\end{lemma}}
\newcommand\pf{\begin{proof}}
\newcommand\bN{{\mathbb N}}
\newcommand\bZ{{\mathbb Z}}
\def\modd#1 #2{#1\ \mbox{\rm (mod}\ #2\mbox{\rm )}}
\begin{document}

\title{On the Solutions of Three Variable Frobenius Related Problems Using Order Reduction Approach}
\author{Tian-Xiao He$^1$, Peter J.-S. Shiue$^2$, and Rama Venkat$^3$\\
{\small $^1$Department of Mathematics}\\
{\small Illinois Wesleyan University}\\
{\small Bloomington, IL 61702-2900, USA}\\
{\small $^2$ Department of Mathematical Sciences}\\
{\small University of Nevada, Las Vegas}\\
{\small Las Vegas, Nevada,  89154-4020, USA}\\
{\small $^3$ Howard R. Hughes College of Engineering}\\
{\small University of Nevada, Las Vegas}\\
{\small Las Vegas, Nevada,  89154-4020, USA}\\
}
\date{}

\maketitle

\begin{abstract}
\noindent 
This paper presents a new approach to determine the number of solutions of three variable Frobenius related problems and to find their solutions by using order reducing methods. Here, the order of a Frobenius related problem means the number of variables appearing in the problem. We present two types of order reduction methods that can be applied to the problem of finding all nonnegative solutions of three variable Frobenius related problems. The first method is used to reduce the equation of order three from a three variable Frobenius related problem to be a system of equations with two fixed variables. The second method reduces the equation of order three into three equations of order two, for which an algorithm is designed with an interesting open problem on solutions left as a conjecture. 

\vskip .2in \noindent AMS Subject Classification: 05A15, 05A05, 15B36,
15A06, 05A19, 11B83.

\vskip .2in \noindent \textbf{Key Words and Phrases:} Frobenius problem, Chicken McNugget Problem, postage stamp problem, Diophantine equation, partition, generating function. 

\end{abstract}

\setcounter{page}{1} \pagestyle{myheadings} 
\markboth{T. X. He, P. J.-S. Shiue, and R. Venkat}
{Solutions of Three Variable Frobenius Related Problems}

\section{Introduction}

Given positive integers $a_1,$ $a_2, \ldots,$ $a_\ell$ with $\gcd(a_1,a_2,\ldots, a_\ell)=1,$ we say $n\in{\bN}$ is representable if 

\[
n=m_1a_1+m_2a_2+\cdots+m_\ell a_\ell
\]
for some $m_1,m_2,\ldots, m_\ell\in{\bN}$. The well-known linear Diophantine problem asks for the largest integer $g=g(a_1,a_2,\ldots, a_\ell)$ that is not representable. The linear Diophantine problems of Frobenius have many alternative names, such as the Frobenius coin problem, the postage stamp problem, and the chicken McNugget problem (cf., for example, Bardomero and Beck \cite{BB}). 

Sylvester \cite{Syl} showed that $g(a_1,a_2)=(a_1-1)(a_2-1)$. That is for any two relatively prime positive integers $a_1$ and $a_2,$ the greatest integer that cannot be written in the form $m_1a_1 + m_2a_2$ for nonnegative integers $m_1, m_2$ is $(a_1-1)(a_2-1)-1=a_1a_2-a_1-a_2$.

A consequence of the theorem is that there are exactly $\frac{(m - 1)(n - 1)}{2}$ positive integers, which cannot be expressed in the form $am + bn$. The proof is based on the fact that in each pair of the form $(k, (m - 1)(n - 1) - k-1),$ exactly one element is expressible.

There are many stories surrounding the origin of the Chicken McNugget theorem. However, the most popular by far remains that of the Chicken McNugget. Originally, McDonald's sold its nuggets in packs of $9$ and $20$. Thus, to find the largest number of nuggets that could not have been bought with these packs creates the Chicken McNugget Theorem (the answer worked out to be $151$ nuggets). More description on the history of McNugget problem can be found, for example, in \cite{Alf,Tri}. 

In Chapman and O'Neill \cite{CON}, the McNugget number of order $3$ is defined. We call $n$ a McNugget number associated with $(p,q,\ell),$ if there exists an ordered triple $(x,y,z)$ of nonnegative integers such that 

\be\label{0.0}
p x+q y+\ell z=n
\ee
for $p,q,\ell,n >0$ and $p\leq q\leq \ell,$ where $a,b,c\geq 0$. The paper \cite{CON} considers the case of $(p, q,\ell)=(6,9,20),$ namely the nonnegative solutions of 

\be\label{0.1}
6x+9y+20 z=n.
\ee
The nonnegative solutions of \eqref{0.1} give the partitions of $n$ into parts $6,$ $9,$ and $20$. The number of those partitions are presented in the sequence {\it A214772}. It is worth noting that the coefficients of \eqref{0.1} are not pairwise relatively prime. In this paper, we consider the following related problems: (P1) For a given number $n,$ how many solutions, if exist, does \eqref{0.0} have? (P2) How to solve for all solutions with respect to given $n$? 

Alfons\'in \cite {Alf}, Hsu, Jiang, and Zhu \cite{HJZ}, and \cite{BB} present a method to solve the problems similar to  \eqref{0.1} by using the generating function. However, a large amount partial fraction steps are needed because the numbers $6,$ $9,$ and $20$ are big.  Chou et al., \cite{CS} gives a matrix method, which can be used to solve \eqref{0.1}. However, this method needs to solve a complicated system of inequalities. In this paper, we present a simple approach to solve the above two problems, which can be extended to a general three-variable Frobenius related problem. 

Denote the number of the solutions of \eqref{0.0} for a given $n$ by $N(p,q,\ell, n)$. Let $A =\{a_1,a_2,\ldots, a_k\}$ be a set of $k$ relatively prime positive integers. Let $p_A (n)$ denote the partition function of $n$ with parts in $A,$ that is, $p_A$ is the number of partitions of $n$ with parts belonging to $A$. Thus, if $a,b,$ and $c$ are relatively prime positive integers, then $N(a,b,c,n)=p_{\{a,b,c\}}(n)$. To find $p_{\{a_1,a_2,a_3\}}(n),$ where $\{a_1,a_2,a_3\}$ are relatively primes, we reduce it to $p_A(n)$ with relatively prime $A=\{a,b\}$  and use the following formula to figure $p_{\{a,b\}}(n)$: For $A=\{a,b\},$ where $(a,b)=1,$ and $n=qab+r$ with $0\leq r<ab,$ Brown, Chou, and one of the authors \cite{BCS} find  the table as follows: 

\begin{displaymath}
p_{\{a,b\}}(n)=\begin{cases}
 q+1 & \text{if $ab-a-b<r<ab;$}\\
 q & \text{if $r=ab-a-b;$}\\ 
 q+1 &\text{if $r<ab-a-b$ and $aa'(r)+bb'(r)+r=2ab;$}\\
 q & \text{if $r<ab-a-b$ and $aa'(r)+bb'(r)+r=ab,$}
 \end{cases}
\end{displaymath}
where $a'(n)$ and $b'(n)$ are defined by $a'(n)a \equiv \modd{-n} {b}$ with $1\leq a'(n)\leq b$ and $b'(n)b \equiv \modd{-n} {a}$ with $1\leq b'(n)\leq a,$respectively. Then, $N(p,q,\ell,n)$ can be found accordingly.

In the next section, we use the table to give the number of solutions of \eqref{0.0}. The process to derive the result also suggests an order reduction algorithm. In Section 3, we will present another order reduction algorithm based on the B\'ezout's lemma. A conjecture about  the solution structure is also given.

\section{The number of solutions of \eqref{0.0}}

First, we establish the following result about the number of the solutions of \eqref{0.0}.

\begin{theorem}\label{pro:1.1}
Let $N(p,q,\ell,n)$ be the number of the solutions of \eqref{0.0}, and let $(p,q)=u$. Denote the set $A=\{p/u,q/u\}$. Then

\be\label{0.3}
N(p,q,\ell,n)=\sum^{\left[ \frac{n-j\ell}{u \ell }\right]}_{k=0}p_A\left(\frac{n-j\ell}{u}-\ell k\right),
\ee
where $j\in \{ 0,1,\ldots, u-1\}$ satisfies $n-\ell j \equiv \modd{0} {u}$. 
\end{theorem}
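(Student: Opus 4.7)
My proof plan is to fix $z$ and reduce \eqref{0.0} to a two-variable linear Diophantine equation, to which the tabular formula of \cite{BCS} recalled in the introduction applies. For each fixed $z \geq 0$ with $\ell z \leq n$, the equation $px + qy = n - \ell z$ has a nonnegative integer solution $(x,y)$ if and only if $u=(p,q)$ divides $n - \ell z$; when this holds, dividing through by $u$ gives $(p/u)x + (q/u)y = (n - \ell z)/u$ with $\gcd(p/u, q/u) = 1$. Nonnegative integer solutions of this reduced equation are in bijection with partitions of $(n - \ell z)/u$ into parts from $A = \{p/u, q/u\}$ via ``$(x,y)$ corresponds to $x$ copies of $p/u$ together with $y$ copies of $q/u$'', so the count is $p_A\bigl((n - \ell z)/u\bigr)$. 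Consequently $N(p,q,\ell,n)$ equals the sum of $p_A\bigl((n - \ell z)/u\bigr)$ taken over all admissible $z$.

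The next step is to enumerate those admissible $z$. The divisibility requirement $u \mid n - \ell z$ is a single linear congruence for $z$ modulo $u$; when solvable, its solution set is a single residue class $z \equiv j \pmod{u}$, where $j \in \{0,1,\ldots,u-1\}$ is the unique representative with $n - \ell j \equiv 0 \pmod{u}$---exactly the $j$ appearing in the statement. Every admissible $z$ then has the form $z = j + u k$ with $k$ a nonnegative integer, and substitution gives $(n - \ell z)/u = (n - \ell j)/u - \ell k$. The constraint $\ell z \leq n$ translates to $k \leq (n - j\ell)/(u\ell)$, and so $k$ ranges over $0, 1, \ldots, [(n - j\ell)/(u\ell)]$. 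Summing the partition count over these $k$ yields exactly the right-hand side of \eqref{0.3}.

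The only step that carries genuine mathematical content is the identification of the number of nonnegative solutions of a coprime two-variable equation with the partition count $p_A$, and this is entirely standard once $p/u \ne q/u$ is noted. I do not anticipate a serious obstacle; the remainder is a short congruence computation that pins down $j$, the substitution $z = j + uk$, and the derivation of the bound on $k$. The degenerate case $p = q$ (where $A$ collapses to a singleton and the partition interpretation would need a small separate tally) is the only place one must exercise minor extra care.
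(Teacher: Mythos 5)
Your proposal is correct and follows essentially the same route as the paper's own proof: reduce to the two-variable equation $(p/u)x+(q/u)y=(n-\ell z)/u$, observe that admissible $z$ form the residue class $z\equiv j\ (\mathrm{mod}\ u)$ determined by $n-\ell j\equiv 0\ (\mathrm{mod}\ u)$, write $z=uk+j$, and bound $k$ by $[(n-j\ell)/(u\ell)]$ before summing the counts $p_A\left(\frac{n-j\ell}{u}-\ell k\right)$. Your explicit remarks on the bijection with partitions and on the degenerate case $p=q$ are minor refinements the paper leaves implicit, not a different argument.
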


\begin{proof}
If $(p,q)=u,$ then equation \eqref{0.0} can be written as

\be\label{0.4}
\frac{x}{u}p+\frac{y}{u}q=\frac{1}{u}(n-\ell z).
\ee
If $x,y,$ and $z$ are solutions of the above equation for a given $n,$ we need $n-\ell z \equiv \modd {0} {u}$.  If $n \equiv \modd{j_1} {u}$ with $j_1\in\{ 0,1,\ldots, u-1\},$ then we may find $z= u k+j$ for $k\geq 0$ and some $j\in \{ 0,1,\ldots, u-1\}$ such that 

\begin{align*}
n-\ell z &=n-\ell (uk+j)=(\alpha u+j_1)-\ell (uk+j)\\
&\equiv \modd{j_1-\ell j} {u}\equiv \modd{0} {u},
\end{align*}
provided that $\ell j\equiv \modd{j_1} {u}$. Since $n-\ell z=n-\ell( uk+j)\geq 0,$ we have $0\leq k\leq (n-j\ell)/u\ell$. 
Since $(p/u,q/u)=1,$ the number of the solution of \eqref{0.4} is 

\[
p_{\{p/u, q/u\}}\left( \frac{n-z\ell }{u}\right)=p_{\{ p/u,q/u\}}\left( \frac{n-j\ell}{u}-\ell k\right).
\]
and the number of solutions of \eqref{0.0} is given by \eqref{0.3}.
\end{proof}

\begin{theorem}\label{thm:1.2}
Let $N(p,q,\ell,n)$ be the number of the solutions of \eqref{0.0}, and let $(p,q)=u$. Denote $A=\{p/u,q/u\}$. Then

\be\label{0.3+1}
N(p,q,\ell,n)=\displaystyle\sum_{{0\leq z\leq [n/\ell]}\atop {u|(n-z\ell)}}p_A\left(\frac{n-z\ell}{u}\right) 
=\displaystyle\sum_{{0\leq z\leq [n/\ell]}\atop {u|(n-z\ell)}}\left(1+M(z)\right),
\ee
where 

\be\label{0.3+4}
M(z)=\frac{u}{pq}(n-z\ell-pa_1(z)-qb_1(z))
\ee
if $n-z\ell-pa_1(z)-qb_1(z)\geq 0,$ and $-1$ otherwise, i.e., $M(z)=\max\{ -1,  u(n-z\ell-pa_1(z)-qb_1(z))/(pq)\},$ $a_1(z)$ and $b_1(z)$ are the smallest nonnegative integers satisfying 

\be\label{0.3+2}
pa_1(z) \equiv \modd{(n-z\ell)} {q}\quad \mbox{and}\quad qb_1(z) \equiv \modd{(n-z\ell)} {p},
\ee
respectively. Furthermore, the set of all nonnegative solutions of the Diophantine equation \eqref{0.0} is  

\be\label{0.3+3}
\left\{ \left( \frac{q}{u}i(z)+a_1(z), (M(z)-i(z))\frac{p}{u}+b_1(z), z\right): 0\leq i(z)\leq M(z)\right\}
\ee 
for all $0\leq z\leq [n/\ell]$ with $u|(n-z\ell),$ where $M(z)$ is defined by \eqref{0.3+4} if $\frac{u}{pq}(n-z\ell-pa_1(z)-qb_1(z)\geq 0,$ otherwise, 
the nonnegative solution set \eqref{0.3+3} does not exist.
\end{theorem}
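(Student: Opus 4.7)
The plan is to fix an admissible $z$ (meaning $0\leq z\leq[n/\ell]$ and $u\mid(n-z\ell)$), invoke the reduction of Theorem~\ref{pro:1.1} to count the two-variable solutions of $px+qy=n-z\ell$ by $p_A((n-z\ell)/u)$, and then derive a closed form for this partition count in terms of $a_1(z),b_1(z),M(z)$. Summing over admissible $z$ will produce \eqref{0.3+1}, while tracking the explicit parametrization will produce the solution set \eqref{0.3+3}. Throughout I set $N:=n-z\ell$, so the problem reduces to counting and listing the nonnegative integer solutions of $px+qy=N$ with $(p,q)=u\mid N$.

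First I would recall that if $(x_0,y_0)$ is any integer solution of $px+qy=N$, then every integer solution is $(x_0+(q/u)t,\,y_0-(p/u)t)$ for $t\in\bZ$. Restricting to $x\geq 0$ shows the $x$-coordinates of integer solutions are exactly the nonnegative integers $x$ with $px\equiv N\pmod q$, an arithmetic progression of common difference $q/u$. By definition, $a_1(z)$ is therefore the smallest nonnegative $x$-coordinate of any integer solution, and necessarily $a_1(z)\in\{0,1,\ldots,q/u-1\}$. By the symmetric argument, $b_1(z)\in\{0,1,\ldots,p/u-1\}$ is the smallest nonnegative $y$-coordinate.

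The key computation is the divisibility claim that $N-pa_1(z)-qb_1(z)$ is divisible by $pq/u=\operatorname{lcm}(p,q)$: $pa_1(z)\equiv N\pmod q$ yields $q\mid N-pa_1(z)-qb_1(z)$, and $qb_1(z)\equiv N\pmod p$ yields $p\mid N-pa_1(z)-qb_1(z)$. Hence $M(z)$ in \eqref{0.3+4} is an integer whenever the numerator is nonnegative. I would then show that nonnegative solutions exist iff $M(z)\geq 0$: the ``if'' direction is witnessed by the pair $(a_1(z),\,M(z)p/u+b_1(z))$, which directly satisfies $px+qy=N$ and has nonnegative entries; the ``only if'' direction combines the two minimalities, since any nonnegative solution $(x,y)$ has $x\geq a_1(z)$ (so $y\leq M(z)p/u+b_1(z)$) and $y\geq b_1(z)$, forcing $M(z)\geq 0$.

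Finally, for $M(z)\geq 0$ I would parametrize the nonnegative solutions by letting $i$ range over $\{0,1,\ldots,M(z)\}$: setting $x=(q/u)i+a_1(z)$ and $y=(M(z)-i)(p/u)+b_1(z)$ gives $px+qy=(pq/u)M(z)+pa_1(z)+qb_1(z)=N$ by direct substitution, and as $i$ runs through its indices the pair $(x,y)$ traverses precisely the arithmetic progression of nonnegative solutions identified above. This delivers $p_A(N/u)=M(z)+1$ when $M(z)\geq 0$, while $p_A(N/u)=0=M(z)+1$ under the convention $M(z)=-1$ otherwise. Summing over admissible $z$ yields \eqref{0.3+1}, and concatenating the $z$-slices of the parametrization yields \eqref{0.3+3}. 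I expect the main technical obstacle to be the two minimality arguments identifying $a_1(z),b_1(z)$ as the endpoints of the solution progression; once $pq/u\mid N-pa_1(z)-qb_1(z)$ is noted, the count $M(z)+1$ emerges from the floor identity $\lfloor(N-pa_1(z))u/(pq)\rfloor=M(z)$, which uses $0\leq b_1(z)u/p<1$.
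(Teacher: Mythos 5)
Your proposal is correct, and it reaches the same formula by a genuinely more self-contained route than the paper. The paper's proof of Theorem~\ref{thm:1.2} is essentially a citation: after the same reduction to the two-variable equation $px+qy=n-z\ell$ for each admissible $z$, it invokes Binner's Corollary~17 for both the count $1+M(z)$ and the explicit solution set \eqref{0.4+3}, and the remaining work is only the substitution $a=p/u$, $b=q/u$, $m(z)=(n-z\ell)/u$ together with rewriting Binner's congruences (stated via modular inverses) in the form \eqref{0.3+2}. You instead derive Binner's formula from scratch: you identify $a_1(z)$ and $b_1(z)$ as the minimal nonnegative $x$- and $y$-coordinates on the arithmetic progression of integer solutions, prove the divisibility $\frac{pq}{u}\mid (n-z\ell)-pa_1(z)-qb_1(z)$ so that $M(z)$ is an integer, characterize solvability by $M(z)\geq 0$ via the two minimality bounds, and parametrize the $M(z)+1$ solutions explicitly. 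All of these steps check out (including the endpoint computation $x_{\max}=a_1(z)+\frac{q}{u}M(z)$ implicit in your progression argument and the floor identity at the end). What your approach buys is independence from the external reference and a transparent reason why $M(z)=-1$ is the right convention when no nonnegative solution exists; what the paper's approach buys is brevity, at the cost of leaving the verification of the two-variable formula to \cite{Bin}.
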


\begin{proof}
We may use the formula shown in Binner \cite[Corollary 17]{Bin} to find the number $p_{\{ p/u,q/u\}}\left(\frac{n-z\ell}{u}\right)$. Denote $a=p/u,$ $b=q/u,$ and $m(z)=(n-z\ell)/u$ $(0\leq z\leq \lfloor (n/\ell)\rfloor)$. 
Thus, $\gcd (a,b)=1,$ and we need $m(z)$ to be a nonnegative integer. In general, from \cite{Bin} the number of nonnegative solutions of $ax+by=m(z)$ is 
\be\label{0.4+1}
p_{a,b}(m(z))=1+\frac{m(z)-aa_1(z)-bb_1(z)}{ab}, 
\ee
if the second term on the right-hand side of \eqref{0.4+1} is nonnegative, where $a_1(z)$ is the remainder when $m(z)a^{-1}$ is divided by $b,$ $b_1$ is the remainder when $m(z)b^{-1}$ is divided by $a,$ and $a^{-1}$ and $b^{-1}$ are the modular inverse of $a$ with respect to $b$ and $b$ with respect to $a,$ respectively, namely,

\[
m(z)a^{-1} \equiv \modd{a_1(z)} {b}\quad \mbox{and} \quad m(z)b^{-1} \equiv \modd{b_1(z)} {a}.
\]
It is obvious that the last two equations are equivalent to 

\[
m(z) \equiv \modd{aa_1(z)} {b}\quad \mbox{and} \quad m(z) \equiv \modd{bb_1(z)} {a}.
\]
Thus, $a_1(z)\geq 0$ and $b_1(z)\geq 0$ can be found by solving (cf. \ Remark $16$ of \cite{Bin}) 
\be\label{0.4+2}
ax \equiv \modd{m(z)} {b}\quad \mbox{and}\quad by \equiv \modd{m(z)} {a}
\ee
for $x \equiv \modd{a_1(z)} {b}$ and $y \equiv \modd{b_1(z)} {a}$. 
From \eqref{0.4+2} we have \eqref{0.3+2} by noticing $a=p/u$ and $b=q/u$. Denote $M(z)=(m(z)-aa_1(z)-bb_1(z))/ab$ if it is nonnegative. Substituting $m(z)=(n-z\ell)/u,$ $a=p/u$ and $b=q/u$ into $M(z),$ we obtain \eqref{0.3+4}. Then, from \eqref{0.4+1} we have 

\[
p_{p/u,q/u}(m(z))=1+M(z)=1+\frac{u}{pq}(n-z\ell-pa_1(z)-qb_1(z))
\]
if $\frac{u}{pq}(n-z\ell-pa_1(z)-qb_1(z)\geq 0,$ which implies \eqref{0.3+1}.

Furthermore, from \cite{Bin} the set of all nonnegative solutions of $ax+by=m(z)$ is 

\be\label{0.4+3}
\{ (bi+a_1(z), (M(z)-i)a+b_1(z)): 0\leq i\leq M(z)\}
\ee
when $M(z)=\frac{u}{pq}(n-z\ell-pa_1(z)-qb_1(z))\geq 0,$ which implies \eqref{0.3+3}. If $\frac{u}{pq}(n-z\ell-pa_1(z)-qb_1(z)< 0,$ then the set shown in \eqref{0.4+3} does not exists, which implies $1+M(z)$ in \eqref{0.3+1} must be zero. Hence, $M(z)=-1$ in this case, which completes the proof of the theorem.
\end{proof}

We use the formula of the number of nonnegative solutions of $ax+by=m$ given in \cite{Bin} with modification for the case the nonnegative solution does not exist. Binner's formula is equivalent to the one given in Tripathi \cite{Tri00}. In addition, the paper \cite{Rev} gives the pretty much the same idea as that used in \cite{Bin}.

We may use Theorems \ref{pro:1.1} and \ref{thm:1.2} to find $N(6,9,20,n)$. To use formula 
\eqref{0.3}, we rewrite \eqref{0.1} as  

\be\label{0.5}
6x+9y=n-20z
\ee
and 

\be\label{0.9}
2x+3y=\frac{1}{3}(n-20z).
\ee
Since $3$ is a factor of the left-hand side of \eqref{0.5}, we have 

\[
n-20z \equiv \modd{0} {3},
\]
which implies 

\[
n \equiv \,20z \equiv \modd{-z} {3}.
\]
Thus, we need to consider three cases for $n \equiv \modd{0,1,2} {3},$ respectively, namely 

\begin{align}
z &\equiv \modd{0} {3},\label{0.6}\\
z &\equiv \modd{1} {3},\quad \mbox{and}\label{0.7}\\
z &\equiv \modd{2} {3},\label{0.8}
\end{align}
respectively. 

In the case of \eqref{0.6}, by noticing $n-20z=6a+9b\geq 0$ and $z=3k$ for $k\geq 0,$ we have 

\[
0\leq 3k\leq \left[ \frac{n}{20}\right].
\]
Thus, for $n \equiv \modd{0} {3},$ from Theorem \ref{pro:1.1} the number of the solutions of \eqref{0.5} is 

\[
N(6,9,20, n)=\sum^{\left[ \frac{n}{60 }\right]}_{k=0}p_{\{ 2,3\}}\left(\frac{n}{3}-20 k\right), \quad n \equiv \modd{0} {3}. 
\]

In the case of \eqref{0.7}, because $n \equiv \modd{1} {3},$ $z \equiv \modd{2} {3}$. By setting $z=3k+2,$ we have the number of the solutions of 

\[
2x+3y=\frac{1}{3}(n-20z)=\frac{1}{3}(n-20 (3k+2))
\]
is 

\[
N(6,9,20, n)=\sum^{\left[ \frac{n-40}{60 }\right]}_{k=0}p_{\{ 2,3\}}\left(\frac{n-40}{3}-20 k\right),\quad n \equiv 
\modd{1} {3}.  
\]

Finally, in the case of \eqref{0.8}, because $n \equiv \modd{2} {3},$ $z \equiv \modd{1} {3}$. By setting $z=3k+1,$ we obtain the number of the solutions of

\[
2a+3b=\frac{1}{3}(n-20z)=\frac{1}{3}(n-20 (3k+1))
\]
is 

\[
N(6,9,20, n)=\sum^{\left[ \frac{n-20}{60 }\right]}_{k=0}p_{\{ 2,3\}}\left(\frac{n-20}{3}-20 k\right), \quad n \equiv 
\modd{2} {3}. 
\]

We now use formula \eqref{0.3+1} to count the solutions of problem \eqref{0.1}. Since $u=\gcd(p,q)=\gcd (6,9)=3,$ to have $u|(n-20z)$ for $n \equiv \modd{0,1,2} {3}$ or $n=3r,3r+1,3r+2$ $(r\in{\bN}),$ we need 
$z \equiv \modd{0,2,1} {3}$ or $ z=3k, 3k+2, 3k+1$ $(k\in {\bN}\cup\{0\}),$ respectively. Thus,

\be\label{Ex:1}
M(z)=\frac{3}{54}(n-20z-6a_1(z)-9b_1(z))=\frac{1}{18}(n-20z-6a_1(z)-9b_1(z)).
\ee
For $n=3r$ and $z=3k,$ we have $a_1(3k)$ and $b_1(3k)$ satisfying 

\be\label{Ex:2}
6a_1(3k) \equiv \modd{(3r-60k)} {3}\quad \mbox{and}\quad 9b_1(3k) \equiv \modd{(3r-60k)} {2}.
\ee
For $n=3r+1$ and $z=3k+2,$ we have $a_1(3k+2)$ and $b_1(3k+2)$ satisfying 

\begin{align}\label{Ex:3}
6a_1(3k+2)&\equiv (3r-60k-39) \equiv \modd{0} {3}\quad \mbox{and}\nonumber\\ 
9b_1(3k+2)&\equiv \modd{(3r-60k-39)} {2}.
\end{align}
For $n=3r+2$ and $z=3k+1,$ we have $a_1(3k+1)$ and $b_1(3k+1)$ satisfying 

\begin{align}\label{Ex:4}
6a_1(3k+1)&\equiv (3r-60k-18) \equiv \modd{0} {3}\quad \mbox{and}\nonumber\\
9b_1(3k+1)&\equiv \modd {(3r-60k-18)} {2}.
\end{align}
Hence, the number of solutions of \eqref{0.1} is 

\begin{align}\label{Ex:5}
N(6,9,20,n)&=\displaystyle\sum_{{0\leq z\leq [n/\ell]}\atop {3|(n-20z)}}p_{\{ 2,3\}}\left(\frac{n-20z}{3}\right) =\displaystyle\sum_{{0\leq z\leq [n/\ell]}\atop {3|(n-20z)}}(1+M(z))\nonumber\\
&=\displaystyle\sum_{{0\leq z\leq [n/\ell]}\atop {3|(n-20z)}}\left(1+\frac{1}{18}(n-20z-6a_1(z)-9b_1(z)\right),\nonumber\\
\end{align}
where $a_1(z)$ and $b_1(z)$ satisfy \eqref{Ex:2}-\eqref{Ex:4}.

The nonnegative solutions of the problem \eqref{0.1} are 

\be\label{Ex:6}
\left\{ \left( 3i(z)+a_1(z), 2(M(z)-i(z))+b_1(z), z\right): 0\leq i(z)\leq M(z)\right\}
\ee 
for all $0\leq z\leq [n/20]$ with $3|(n-20z),$ where $M(z)$ is given in \eqref{Ex:1}. 
More specifically, for $n=3r,$ $3r+1,$ and $3r+2,$ we have $z=3k,$ $3k+2,$ and $3k+1,$ respectively, 
and the corresponding 

\be\label{Ex:7}
M(3k)=\frac{1}{18}(3r-60k-6a_1(3k)-9b_1(3k))
\ee
for $0\leq k\leq [r/20],$ 

\be\label{Ex:8}
M(3k+2)=\frac{1}{18}(3r-60k-39-6a_1(3k+2)-9b_1(3k+2))
\ee
for $0\leq k\leq [(r-13)/20],$ and 

\be\label{Ex:8}
M(3k+1)=\frac{1}{18}(3r-60k-39-6a_1(3k+1)-9b_1(3k+1))
\ee
for $0\leq k\leq [(r-6)/20].$

For instance, if $n=39,$ then $M(3k)$ is given by \eqref{Ex:7}, where $a_1(3k)$ and $b_1(3k)$ satisfying  

\[
2a_1(3k) \equiv \modd{(13-20k)} {3}\quad \mbox{and}\quad 3b_1(3k) \equiv \modd{(13-20k)} {2}
\]
for $k=0$. Thus, $a_1(3k)=a_1(0)=2$ and $b_1(3k)=b_1(0)=1,$ and 

\[
M(3k)=M(0)=\frac{1}{18}(39-6\cdot 2-9\cdot 1)=1.
\]
Consequently, the problem $6x+9y+20z=39$ has $1+M(0)=2$ solutions, which are 

\[
\displaystyle{\cup_{0\leq i\leq M(0)\atop 0\leq k\leq [13/20]}}\{(3i+2, 2(M(3k)-i)+1,3k)\}
=\{(2,3,0), (5,1,0)\}.
\]

If $n=46 \equiv  \modd{1} {3},$ then $M(3k+2)$ is given by \eqref{Ex:8}, where $a_1(3k+2)$ and $b_1(3k+2)$ satisfying 

\[
2a_1(3k+2) \equiv \modd{(2-20k)} {3}\quad \mbox{and}\quad 3b_1(3k+2) \equiv \modd{(2-20k)} {2}
\]
for $k=0$. Thus, $a_1(3k+2)=a_1(2)=1$ and $b_1(3k+2)=b_1(2)=0,$ and 

\[
M(3k+2)=M(2)=\frac{1}{18}(6-6\cdot 1-9\cdot 0)=0.
\]
Consequently, the problem $6x+9y+20z=46$ has $1+M(2)=1$ solution, which is 

\[
\displaystyle{\cup_{0\leq i\leq M(2)\atop 0\leq k\leq [2/20]}}\{(3i+1, 2(M(2)-i)+0,3k+2)\}
=\{(1,0,2)\}.
\]

If $n=50 \equiv \modd{2} {3},$ then $M(3k+1)$ is given by \eqref{Ex:8}, where $a_1(3k+1)$ and $b_1(3k+1)$ satisfying 

\[
2a_1(3k+1) \equiv \modd{(10-20k)} {3}\quad \mbox{and}\quad 3b_1(3k+1) \equiv \modd{(10-20k)} {2}
\]
for $k=0$. Thus, $a_1(3k+1)=a_1(1)=2$ and $b_1(3k+1)=b_1(1)=0,$ and 

\[
M(3k+1)=M(1)=\frac{1}{18}(30-6\cdot 2-9\cdot 0)=1.
\]
Consequently, the problem $6x+9y+20z=50$ has $1+M(1)=2$ solutions, which are 

\[
\displaystyle{\cup_{0\leq i\leq M(1)\atop 0\leq k\leq [10/20]}}\{(3i+2, 2(M(1)-i)+0,3k+1)\}
=\{(2,2,1), (5,0,1)\}.
\]

To find all solutions of \eqref{0.0}, we may use Theorem \ref{thm:1.2} to determine the number of solutions and use the following proposition to find each solution of \eqref{0.0}.

\begin{proposition}\label{pro:1.2}
Let $(x,y)=(x_0,y_0)$ be a solution of $p x+q y=m$. Then all solutions of the equations are $(x_0+q t, y_0-p t)$ for all $t\in {\bZ}$. 
\end{proposition}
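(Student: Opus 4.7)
The plan is a standard two-part verification: first show that every pair of the form $(x_0+qt,\,y_0-pt)$ with $t\in\bZ$ solves the equation, and then show that every solution has this form. Both directions are short, so the real task is to be explicit about where the coprimality of $p$ and $q$ (implicit in the context in which this proposition will be invoked, namely right after reducing $px+qy=m$ to the relatively prime setting via $u=\gcd(p,q)$) is used.

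For the forward direction I would simply substitute into the left-hand side:
\[
p(x_0+qt)+q(y_0-pt)=px_0+qy_0+pqt-pqt=px_0+qy_0=m,
\]
so each such pair is indeed a solution.

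For the converse, let $(x,y)$ be any solution. Subtracting $px_0+qy_0=m$ from $px+qy=m$ gives $p(x-x_0)=q(y_0-y)$. Since $\gcd(p,q)=1$ (in the setting this proposition is applied), Euclid's lemma forces $q\mid(x-x_0)$, so there exists $t\in\bZ$ with $x-x_0=qt$. Plugging this back into $p(x-x_0)=q(y_0-y)$ and cancelling the nonzero factor $q$ yields $y_0-y=pt$, i.e.\ $y=y_0-pt$. Thus $(x,y)=(x_0+qt,\,y_0-pt)$, as required.

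There is essentially no obstacle here beyond noting that the parametrization $(x_0+qt,y_0-pt)$ as written is only exhaustive when $\gcd(p,q)=1$; otherwise the correct step sizes would be $q/\gcd(p,q)$ and $p/\gcd(p,q)$. Since the proposition will be used downstream after dividing out the common factor $u$, this poses no issue, and I would simply remark on this to keep the statement consistent with its intended use.
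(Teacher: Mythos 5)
Your proof is correct and complete. For comparison: the paper itself offers no proof of this proposition at all --- it is stated bare, and the general version (with step sizes $q/d$ and $p/d$ for $d=\gcd(p,q)$) appears only later as Lemma \ref{lem:1.2}, quoted from \cite{MSK} with the remark that the expressions are ``clearly true.'' So your two-part verification (substitution for one inclusion, subtracting the two equations and applying Euclid's lemma for the other) supplies exactly the standard argument the paper leaves implicit. Your side remark is also a genuinely useful catch rather than pedantry: as literally stated, with increments $q$ and $p$, the proposition is false unless $\gcd(p,q)=1$ (e.g.\ $6x+9y=30$ has solutions $(5,0)$ and $(2,2)$, and the latter is not of the form $(5+9t,\,-6t)$); it is only consistent with Lemma \ref{lem:1.2} and with its actual use in Example \ref{ex:2.1}, where it is applied to $2x+3y=28$ after the common factor $u$ has been divided out. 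Making that hypothesis explicit, as you do, is an improvement on the paper's statement.
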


\begin{example} \label{ex:2.1} 
Consider the equation 

\[
6x+9y+20z=84,
\]
and rewrite it as 

\[
2x+3y=\frac{1}{3}(84-20z).
\]
Since $84 \equiv \modd{0} {3},$ we have the number of the solutions of the equation 

\begin{align*}
N(6,9,20,84)&=\sum^{[84/60]}_{k=0}p_{\{2,3\}}\left( \frac{84}{3}-20k\right)\\
&=p_{\{2,3\}}(28)+p_{\{2,3\}}(28-20).
\end{align*}
Form the formula for $p_{\{a,b\}}(n)$ presented in the Introduction by noting $a=2,$ $b=3,$ and $n=28,$ we have $28=4\cdot 6+4$ with $q=4$ and $r=4$ satisfying $2\cdot 3-2-3=1<r<6=2\cdot 3$. Thus $p_{\{2,3\}}(28)=q+1=5$. While for $a=2,$ $b=3,$ and $n=8,$ we have $8=1\cdot 6+2$ with $q=1$ and $r=2$ satisfying $1<r<6$. Thus $p_{\{2,3\}}(8)=q+1=2$ and 

\[
N(6,9,20,84)=p_{\{2,3\}}(28)+p_{\{2,3\}}(28-20)=7.
\]

The nonnegative solutions of $2x+3y=28-20z=28$ can be found from Proposition \ref{pro:1.2} and Euclidean algorithm as follows:
 
\begin{align*}
x&=2,\quad y=8,\quad z=0;\\
x&=5,\quad y=6,\quad z=0;\\
x&=8,\quad y=4,\quad z=0;\\
x&=11,\quad y=2,\quad z=0;\\
x&=14,\quad y=0,\quad z=0.
\end{align*}
The non-negative solutions of $2x+3y=28-20z=8$ are 

\begin{align*}
x&=1,\quad y=2,\quad z=1;\\
x&=4,\quad y=0,\quad z=1.
\end{align*}
\end{example}

For problems for which the Euclidean algorithm is inefficient, an alternate approach is presented below. Consider the equation to be $px+qy=m$ with the given triple $(p,q,m)$, for which $m$ is divisible by $d=\gcd (p,q)$, we are seeking integer solutions (x,y).
\begin{enumerate}
\item[Step 1:] Want a nonnegative integer solution $(x_0,y_0)$ for
\[
px' + qy' =m
\]
\item[Step 2:] Use modular arithmetic on the equation in Step $1$. 
\[
qy' \equiv \modd{m} {p}
\]
\item[Step 3:] Solve the above congruence  using the smallest possible nonnegative value $y'$. Substituting the value of $y'$ in the equation finds $x'$. If $x'$ and $y'>0,$ then, $x_0 = x'$ and $y_0 = y'$. Then step 4 can be skipped.
\item[Step 4:] If  $x' < 0,$ then use Bezout's Lemma to obtain the first nonnegative solution$(x_0 , y_0 )$ from  $(x',y')$ with a positive integer $n$ such that $x_{0}=x'+\frac{nq}{d}> 0$ and $y_{0}=y'- \frac{np}{d} > 0,$ where $d=\gcd (p,q)$. If no such $n$ exists, then there are no nonnegative solutions to the problem. For example, $3x+4y=5$ does not have any nonnegative solutions.
\end{enumerate}
Illustration of the algorithm is as follows.
Consider
\[
3x+5z=14 \quad
\]
Note that the common divisor, d,  of 3 and 5 is 1.
\begin{enumerate}
\item[Step 1:]Want nonnegative integer solution, $(x_0, y_0),$ for $3x_0+5y_0 =14$
\item[Step 2:]$3x' + 5y' \equiv \modd{14} {3}$ implies $2y' \equiv \modd{2} {3}$ and $y' \equiv \modd{1} {3}$. Hence, $y' = 1$ and  $x' = 3$.
\item[Step 3:] Since $x'= 3 > 0,$ and $y'= 1 >0,$ we have 
$(x_0,y_0)=(x',y')=(3,1)$.
\end{enumerate}

\begin{remark}\label{rem:2.1} 
We may also use the method presented in Binner \cite{Bin} to solve $2x+3y=28$ for obtaining the set of all nonnegative solutions $(x,y)$. We set $2x \equiv \modd{28} {3}$ and $3y \equiv 28 \equiv \modd{0} {2}$ to obtain $x \equiv \modd{2} {3}$ and $y \equiv \modd{0} {2},$ respectively. Thus, $a_1=2,$ $b_1=0,$ and $M=(28-2\cdot 2-3\cdot 0)/6=4$. Consequently, the nonnegative solutions of $2x+3y=28$ are $\{(3i+2, 2(4-i)+0):0\leq i\leq 4\},$ which are exactly the same as what we have obtained. 
\end{remark}

\begin{remark}\label{rem:2.2} 
From the theory of partition (cf. \ \cite{And}), the generating function of the sequence $(N_n=N(p,q,\ell,n))_{n\geq 0}$ is 

\be\label{0.3-2}
\sum_{n\geq 0}N_nt^n=\frac{1}{(1-t^p)(1-t^q)(1-t^\ell)}.
\ee
From Taylor's expansion, we have 

\be\label{0.4-2}
N_n=\left. \frac{1}{n!}\frac{d^n}{dt^n}\frac{1}{(1-t^p)(1-t^q)(1-t^\ell)}\right|_{t=0}.
\ee
For smaller coefficients $p,q,$ and $\ell,$ we may find $N_n$ more efficiently. For instance, 
if $p=1,$ $q=2,$ and $\ell=3,$ then the solution number $N_n$ can be found by using the partial fraction technique shown below. Let $\omega=e^{2\pi i/3}=\cos(2\pi/3)+i\sin(2\pi/3)$. Then 

\begin{align*}
\frac{1}{(1-t)(1-t^2)(1-t^3)}&=\frac{1}{(1-t)^3(1+t)(1-\omega t)(1-\omega^2t)}\\
&=\frac{1}{6(1-t)^3}+\frac{1}{4(1-t)^2}+\frac{17}{72(1-t)}+\frac{1}{8(1+t)}\\
&+\frac{1}{9(1-\omega t)}+\frac{1}{9(1-\omega^2t)}\\
&=\sum_{n\geq 0}\left( \frac{(n+3)^2}{12}-\frac{7}{72}+\frac{(-1)^n}{8}+\frac{2}{9}\cos\frac{2n\pi}{3}\right)t^n\\
&=\sum_{n\geq 0}N_n t^n
\end{align*}
for $|t|<1$. Since 

\[
\left| -\frac{7}{72}+\frac{(-1)^n}{8}+\frac{2}{9}\cos\frac{2n\pi}{3}\right|\leq \frac{32}{72}<\frac{1}{2},
\]
and $N_n$ must be an integer, we obtain $N_n=\left< \frac{(n+3)^2}{12}\right>,$ where $<\alpha>$ $(\alpha \not= 1/2)$ is referred to as the closest integer to $\alpha$. For instance, 
the solution number for $x+2y+3z=14$ is 

\[
N_{14}=\left< \frac{(14+3)^2}{12}\right>=24. 
\] 
\end{remark}

\section{Order reduction algorithm and exhaustive method for solving \eqref{0.0}}

To find all solutions of \eqref{0.0} we may use the following B\'ezout's Lemma (cf. \ Millman, Kahn, and one of the authors \cite[Theorem 9]{MSK}).

\begin{lemma}\label{lem:1.2}
The linear Diophantine equation $px+qy=r$ has a solution if and only if $r$ is divisible by $d=(p,q)$. Furthermore, if $(x_0,y_0)$ is any particular solution of this equation, then all other solutions are given by 

\be\label{0.10}
x'=x_0+\frac{q}{d}k\quad y'=y_0-\frac{p}{d}k,
\ee
where $k$ is an arbitrary integer.
\end{lemma}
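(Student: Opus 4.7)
The plan is to prove the two assertions separately: first the existence criterion $d\mid r$, then the parameterization of the full solution set.

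For the existence half, I would first handle necessity: if $(x,y)\in\bZ^2$ satisfies $px+qy=r$, then since $d=(p,q)$ divides both $p$ and $q$ it also divides $px+qy=r$, so $d\mid r$ is forced. For sufficiency, I would invoke the standard fact (provable by the extended Euclidean algorithm) that there exist integers $u,v$ with $pu+qv=d$. Given $d\mid r$, write $r=d\cdot s$ and set $(x_0,y_0)=(su,sv)$; then $px_0+qy_0=s(pu+qv)=sd=r$, exhibiting a particular integer solution. This part is clean and self-contained; I would cite the Euclidean-algorithm existence of $(u,v)$ rather than reproving it.

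For the parameterization, suppose $(x_0,y_0)$ is one integer solution and $(x',y')$ is another. Subtracting the two equations $px_0+qy_0=r$ and $px'+qy'=r$ gives
\[
p(x'-x_0)=-q(y'-y_0).
\]
Dividing through by $d$ and writing $p=dp_1$, $q=dq_1$ with $(p_1,q_1)=1$, I would get $p_1(x'-x_0)=-q_1(y'-y_0)$. Since $q_1\mid p_1(x'-x_0)$ and $(p_1,q_1)=1$, Euclid's lemma yields $q_1\mid (x'-x_0)$, so $x'-x_0=q_1 k=(q/d)k$ for some $k\in\bZ$. Substituting back and cancelling $p_1$ gives $y'-y_0=-p_1 k=-(p/d)k$. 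This recovers the formulas in \eqref{0.10}. Conversely, any pair of the form \eqref{0.10} plainly satisfies $px'+qy'=px_0+qy_0+(pq/d-pq/d)k=r$, so the family is exactly the solution set.

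The main obstacle, if any, is the invocation of the existence of $(u,v)$ with $pu+qv=d$; this is the real content and is typically established by backward substitution in the Euclidean algorithm. Since the excerpt cites \cite{MSK} for this lemma, I would simply appeal to that reference for the existence step and focus the written argument on the parameterization, where the key trick is the use of $(p_1,q_1)=1$ and Euclid's lemma to extract the integer parameter $k$.
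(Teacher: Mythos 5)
Your proof is correct and complete. The paper itself offers no proof of this lemma: it simply cites \cite{MSK} and remarks that the expressions in \eqref{0.10} are ``clearly true,'' so your argument fills in exactly the standard details that the paper defers to the reference --- necessity of $d\mid r$ by divisibility, sufficiency via the extended Euclidean algorithm, and the parameterization via Euclid's lemma applied to the coprime pair $(p/d,\,q/d)$. This is the canonical route, and there is nothing to flag beyond noting that your treatment is more self-contained than the paper's.
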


Let $p$ and $q$ be integers with the greatest common divisor $d$. Then from B\'ezout's lemma there exist integers $x$ and $y$ such that $px+qy=d$. More generally, the integers of the form $px+qy$ are the multiples of $d$. Expressions $x'$ and $y'$ shown in \eqref{0.10} are clearly true. 

We are going to use B\'ezout's lemma to solve a problem with two different features: (1) We are solving the Diophantine equation \eqref{0.0} of order $3,$ and (2) we are seeking all non-negative  solutions. 

Our algorithm is based on an order reducing technique. More precisely, let $p,q,\ell,$ and $n\in{\bN}$ with $p\leq q\leq \ell,$ and let $x,y,z\in{\bZ}$. A linear Diophantine equation $ax+by+cz=d$ of three-variables is reduced to the following three Diophantine equations of two variables after setting $x,y,z=0,$ respectively.    Suppose one of $(p,q),$ $(q,\ell),$ and $(\ell,p)$ are divisors of $n$. B\'ezout's lemma \ref{lem:1.2} 
shows that at least one of the Diophantine equations 

\begin{align}\label{0.11}
px+qy&=n\nonumber\\
qy+l z&=n\quad \mbox{and}\nonumber\\
px+lz&=n
\end{align}
has solutions. For instance, if $(p,q)|n,$ then one pair of non-negative solutions $(x_0,y_0)$ of the first equation $px+qy=n$ can be found easily by using an extended Euclidean algorithm easily. Particularly, if $(p,q)=p$ (or $q$), then it is easier to obtain a pair of solutions as $(n/p,0)$ (or $(0,n/q)$), and all pairs of solutions of the equation can be represented in the form 

\be\label{0.12}
\left( x_0+k\frac{q}{(p,q)},\, y_0-k\frac{p}{(p,q)}\right),
\ee
where $k$ is an arbitrary integer. The set of all those solutions $(x,y,0)$ is denoted by $S,$ i.e., 

\[
S=\left\{\left( x_0+k\frac{q}{(p,q)},\, y_0-k\frac{p}{(p,q)}\right): k\in {\bZ}\right\}.
\]
We are seeking nonnegative solutions of $px+qy=n,$ i.e., a particular solution pair $(x_0, y_0),$ where $x_0, y_0\geq 0,$ and all solution pairs in \eqref{0.12} for $k$ such that 

\be\label{0.13}
k\in K=\{ k\in {\bZ}: -\frac{x_0(p,q)}{q}\leq k_1\leq \frac{y_0(p,q)}{p}\},
\ee
that is $x_0+k\frac{q}{(p,q)},$ $y_0-k\frac{p}{(p,q)}\geq 0$.

In the above algorithm, we must assume one of the conditions, $(p,q)|n,$ $(q,\ell)|n,$ and $(p,\ell)|n,$ holds; otherwise, our algorithm fails because if $gcd(p,q)$ does not divide $n$, then clearly $px + qy = n$ has no solutions since $gcd(p, q)$ divides the left hand side but not the right hand side.

Consequently, if $p,q,$ and $l$ are pairwise coprime numbers, then $px+qy+lz=n$ can be solved by using our algorithm. As what we have defined before, the set of all solutions of the first equation in \eqref{0.11} is denoted by $S_1$. Similarly, we let $S_2$ and $S_3$ denote the sets of the solutions of the second and the third equations of \eqref{0.11}, respectively. 

It is obvious that for any integers $a,b,$ and $c,$ with $a+b+c\not= 0$ and row vectors $s_i\in S_i,$ $i=1,2,$ and $3,$ $(as_1+bs_2+cs_3)/(a+b+c)$ is also a solution of $px+qy+\ell z=n$ because 

\[
\frac{as_1+bs_2+cs_3}{a+b+c}\cdot (p,q,\ell)^T=\frac{1}{a+b+c}(an+bn+cn)=n.
\]
Because a solution obtained by using our order reducing method has at least one zero component,  we can see how important it is to use these linear combinations complete the set of all solutions of \eqref{0.0}, say to calculate the solutions with non-zero components. We will also demonstrate how to build those linear combinations by using some examples. 

\begin{proposition}\label{pro:3.1}
Let sets $S_i,$ $i=1,2,3,$ be defined before, and let $a,b,$ and $c$ be any non integers, with $a+b+c\not= 0$. We have notation 

\begin{align*}
\widehat S_1&=\left\{ \left( 0, \, y_{2,0}+k_2\frac{l}{(q,l)},\, z_{2,0}-k_2\frac{q}{(q,l)}\right): y_{2,0}, z_{2,0}\geq 0, k_2\in K_1\right\},\\
\widehat S_2&=\left\{ \left( x_{3,0}+k_3\frac{l}{(p,l)},\, 0, \, z_{3,0}-k_3\frac{p}{(p,l)}\right): x_{3,0}, z_{3,0}\geq 0, k_3\in K_2\right\}, \\
\widehat S_3&=\left\{ \left( x_{1,0}+k_1\frac{q}{(p,q)},\, y_{1,0}-k_1\frac{p}{(p,q)}, \, 0\right): x_{1,0},y_{1,0}\geq 0, k_1\in K_3\right\},
\end{align*}
where $(y_{2,0}, z_{2,0})$, $(x_{3,0}, z_{3,0})$ and $(x_{1,0}, y_{1,0})$ are solutions of $qy+\ell z=n$, $px+\ell z=n$, and $px+qy=n$, respectively, and 

\begin{align*}
K_1&=\{ k\in {\bZ}: -\frac{y_{2,0}(q,l)}{l}\leq k \leq \frac{z_{2,0}(q,l)}{q}\},\\
K_2&=\{ k\in {\bZ}: -\frac{x_{3,0}(p,l)}{l}\leq k \leq \frac{z_{3,0}(p,l)}{p}\},\\
K_3&=\{ k\in {\bZ}: -\frac{x_{1,0}(p,q)}{q}\leq k \leq \frac{y_{1,0}(p,q)}{p}\}.
\end{align*}
Then all non-negative elements $(x,y,z)$ in the set 

\[
\left\{ \frac{as_1+bs_2+cs_3}{a+b+c}: a+b+c\not= 0, s_i\in \widehat S_i, i=1,2,3\right\}
\]
is a solution of \eqref{0.0}.
\end{proposition}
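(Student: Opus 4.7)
The plan is to verify two ingredients separately. First I would show that every element of each $\hat S_i$ is itself a nonnegative integer solution of \eqref{0.0}; this is essentially a direct application of B\'ezout's lemma (Lemma \ref{lem:1.2}). Second, I would check that the operation of taking the $(a,b,c)$-weighted average $(as_1+bs_2+cs_3)/(a+b+c)$ preserves the equation $px+qy+\ell z=n$ as a rational identity; then the integrality and nonnegativity hypotheses baked into the statement finish the argument.

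For the first step, take $s_1\in\hat S_1$, which by definition has the form $\left(0,\,y_{2,0}+k_2\ell/(q,\ell),\,z_{2,0}-k_2 q/(q,\ell)\right)$ with $(y_{2,0},z_{2,0})$ a nonnegative solution of $qy+\ell z=n$. By Lemma \ref{lem:1.2}, every integer solution of $qy+\ell z=n$ has exactly this parametric form, and the range $K_1$ is designed so that both the second and third coordinates remain nonnegative. Direct substitution gives
\[
p\cdot 0+q\!\left(y_{2,0}+k_2\frac{\ell}{(q,\ell)}\right)+\ell\!\left(z_{2,0}-k_2\frac{q}{(q,\ell)}\right)=qy_{2,0}+\ell z_{2,0}=n,
\]
so $s_1$ is a nonnegative integer solution of \eqref{0.0}. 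The corresponding checks for $\hat S_2$ and $\hat S_3$ are identical after permuting the roles of $p,q,\ell$.

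For the second step, view each $s_i$ as a row vector so that $s_i\cdot(p,q,\ell)^T=n$. By linearity,
\[
\frac{as_1+bs_2+cs_3}{a+b+c}\cdot(p,q,\ell)^T=\frac{an+bn+cn}{a+b+c}=n,
\]
which is precisely the computation previewed in the paragraph preceding the proposition. Hence any $(x,y,z)$ of this form automatically satisfies $px+qy+\ell z=n$ over $\bQ$. The only delicate point---and what I expect to be the main obstacle in practice, though not in the proof itself---is that one must restrict to those weighted averages that land in $\bN^3$; the proposition makes no claim about which weights $(a,b,c)$ achieve this, only that any such combination which happens to be a nonnegative integer triple is a valid solution of \eqref{0.0}. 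The proof therefore reduces to the two clean verifications above.
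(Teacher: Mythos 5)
Your proof is correct and follows the same route the paper intends: the paper omits the proof as ``obvious from the above discussion,'' and that discussion is exactly your second step (the linearity computation $\frac{as_1+bs_2+cs_3}{a+b+c}\cdot(p,q,\ell)^T=n$), while your first step just makes explicit the routine verification, via Lemma \ref{lem:1.2}, that each $\widehat S_i$ consists of nonnegative solutions of \eqref{0.0}. Nothing is missing; you have simply written out what the authors left implicit.
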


The proof is obvious from the above discussion, and is, therefore, omitted. 

To avoid using linear combinations, we may use the following exhaustive method. We assume 
$p\leq q\leq \ell$ and let  

\[
S(z_i)=\{ (x,y, i): px+qy=n-z_i,\,\,z_i=i\ell \,\, with\,\, 0\leq z_i\leq n\}.
\]
Then the solution set of \eqref{0.0} is $\cup^{[n/l]}_{i=0} S(z_i)$. 

\begin{example}\label{ex:3.1}
Consider the equation 

\be\label{3.1}
6x+9y+20z=84
\ee
and reduce it to three Diophantine equations of order $2$:

\begin{align}
&9y+20z=84,\label{3.3}\\
&6x+20z=84\label{3.4}\\
&6x+9y=84,\label{3.2}
\end{align}
for the cases of $z=0,$ $x=0,$ and $y=0$ in \eqref{3.1}.

Equation \eqref{3.2} can be written as $2x+3y=28$. Hence, $y=0$ yields a particular solution of \eqref{3.2} as $x=14$ and $y=0$. The solution set $\widehat S_3$ is 

\begin{align*}
\widehat S_3&=\left\{ \left( 14+k_1\frac{9}{(6,9)},\, 0-k_1\frac{6}{(6,9)}, \,0\right): -\frac{14}{3}\leq k_1\leq 0\right\}\\
&=\left\{ (14,0,0), (11,2,0), (8,4,0), (5,6,0), (2,8,0)\right\}.
\end{align*}

Similarly, from equation \eqref{3.3} we have 

\[
y=\frac{84-20z}{9},
\]
which shows \eqref{3.3} has no non-negative integer solutions $(y,z)$. Hence, 
the solution set of equation \eqref{3.3} is 

\[
\widehat S_1=\phi.
\]

Finally, from equation \eqref{3.4} we have 

\[
x=\frac{42-10z}{3}=12-3z+\frac{6-z}{3}.
\]
To have non-negative solutions of \eqref{3.4}, we must have $z=0$ or $z=3,$ which implies 

\[
\widehat S_2=\left\{ (14,0,0), (4,0,3)\right\}.
\]
Hence, we have found $6$ distinct solutions of \eqref{3.1}. The $7$th solution is from a linear combination of the elements $(8,4,0),$ $(11,2,0)$ and $(4,0,3)$ of the solution set $\cup_{i=1}^3\widehat S_i$ with the coefficients $a=1,$ $b=-1$ and $c=1$:

\begin{align*}
&\frac{a}{a+b+c}(8,4,0)+\frac{b}{a+b+c}(11,2,0)+\frac{c}{a+b+c}(4,0,3)\\
&=(8,4,0)-(11,2,0)+(4,0,3)=(1,2,3).
\end{align*}
Hence, the solutions of $6x+9y+20\ell=84$ are  

\be\label{S}
S=\left\{ (14,0,0), (11,2,0), (8,4,0), (5,6,0), (2,8,0),(4,0,3), (1,2,3)\right\}.
\ee

If we use the exhaustive method to check our results, then we obtain

\begin{align*}
S(0)&=\left\{ (14,0,0), (11,2,0), (8,4,0), (5,6,0), (2,8,0)\right\},\\
S(20)&=\left\{ (x,y,1):6x+9y=84-20=64\right\}=\phi,\\
S(40)&=\left\{ (x,y,2):6x+9y=84-40=44\right\}=\phi,\\
S(60)&=\left\{ (x,y,3):6x+9y=84-60=24\right\}=\left\{ (4,0,3), (1,2,3)\right\}.
\end{align*}
Hence, the solutions of $6x+9y+20\ell=84$ are as the same as \eqref{S}
\end{example}

However, sometimes we have many solutions of $px+qy+rz=n$ that are not from the union $\cup^3_{i=1}\widehat S_i$. Therefore, we need calculate the number of the solutions and develop an efficient way (see below) to find the solutions in the form of $(as_1+bs_2+cs_3)/(a+b+c),$ where $a,b,c\in {\bZ}$ and $s_i\in \widehat S_i,$ $i=1,2,3$. For instance, we may consider $(p,q,l)=(1,2,3),$ and the corresponding solution sets of $x+2y+3z=14$ by using our algorithm are $S_1=\{(0, 7+3k_3, -2k_3):k_3\in {\bZ}\},$ $S_2=\{ (14+3k_2,0,-k_2):k_2\in {\bZ}\},$ and $S_3=\{(2k_1, 7-k_1,0):k_1\in {\bZ}\},$ respectively.  Hence, 

\begin{align}
& \widehat S_1=\{(0,7,0), (0,4,2), (0,1,4)\},\label{S3}\\
& \widehat S_2=\{(14,0,0), (11,0,1), (8,0,2), (5,0,3), (2,0,4)\},\label{S2}\\
& \widehat S_3=\{(0,7,0), (2,6,0), (4,5,0), (6,4,0), (8,3,0), (10,2,0),\nonumber\\
&\qquad  (12,1,0), (14,0,0)\}.\label{S1}
\end{align}
However, by using Theorem \ref{pro:1.1}, we may find that the number of the solutions of $x+2y+3z=14$ is 

\begin{align*}
\sum^{[14/3]}_{k=0}p_{1,2}(14-3k)&=p_{1,2}(14)+p_{1,2}(11)+p_{1,2}(8)+p_{1,2}(5)+p_{1,2}(2)\\
&= 8+6+5+3+2=24,
\end{align*}
where $p_{1,2}(n),$ $n=14,11,8,5,$ and $2,$ are found by using Theorem \ref{pro:1.1}.

\begin{remark}\label{rem:3.1} 
For small $p$ and $q,$ the number of the solutions, denoted by $p_{p,q}(n),$ of the Diophantine equation $px+qy=n$ can be also found by using the theory of partition, which is similar to the process to derive \eqref{0.3} and \eqref{0.4}. More precisely, from the theory of partition (cf. \ \cite{And}), the generating function of the sequence $(P_n=p_{p,q}(n))_{n\geq 0}$ is 

\be\label{0.3-3}
\sum_{n\geq 0}P_nt^n=\frac{1}{(1-t^p)(1-t^q)}.
\ee
From Taylor's expansion, we have 

\be\label{0.4-3}
P_n=\left. \frac{1}{n!}\frac{d^n}{dt^n}\frac{1}{(1-t^p)(1-t^q)}\right|_{t=0}.
\ee

Let $N_n$ and $P_n$ be defined by \eqref{0.3-2}-\eqref{0.4-2} and \eqref{0.3-3}-\eqref{0.4-3}, respectively. Then 

\[
N_n=\sum^{[n/\ell]}_{k=0}P_{n-\ell k}. 
\]
Consequently, by using a straightforward exhaustive method for solving the Diophantine equation \eqref{0.0}, we have

\[
\frac{d^n}{d t^n}\frac{1}{(1-t^p)(1-t^q)(1-t^\ell)}=n!\sum^{[n/\ell]}_{k=0}\frac{1}{(n-\ell k)!}
\frac{d^{n-\ell k}}{dt^{n-\ell k}}\frac{1}{(1-t^p)(1-t^q)}.
\]

For smaller coefficients $p,q,$ and $\ell,$ we may find $P_n$ more efficiently. For instance, 
if $p=1$ and $q=2,$ then the solution number $P_n$ can be found by using the partial fraction technique shown below. Since 

\[
\frac{1}{(1-t)(1-t^2)}=\frac{1}{4}\left(\frac{1}{1+t}+\frac{1}{1-t}+\frac{2}{(1-t)^2}\right)\quad (|t|<1),
\]
the expansion of the power series can be written as 

\[
\sum_{n\geq 0}P_nt^n=\sum_{n\geq 0}\frac{(-1)^n+1+2(n+1)}{4}t^n.
\]
Consequently, 

\[
P_n=\frac{1}{4}(2n+3+(-1)^n),
\]
which gives $p_{1,2}(14)=P_{14}=8,$ $p_{1,2}(11)=P_{11}=6,$ $p_{1,2}(8)=P_8=5,$ $p_{1,2}(5)=P_5=3$, and $p_{1,2}(2)=P_2=2$. 
\end{remark}

Since 

\begin{align}
\cup^3_{i=1} \widehat S_i =&\{ (0,7,0), (2,6,0), (4,5,0), (6,4,0), (8,3,0), (10,2,0),\nonumber \\
&(12,1,0), (14,0,0), (11,0,1), (8,0,2), (5,0,3), (2,0,4), (0,4,2), (0.1.4)\}\nonumber\\
&
\label{S4}
\end{align}
we need to find $10$ more solutions by using the linear combination $(as_1+bs_2+cs_3)/(a+b+c),$ where $s_i\in S_i,$ $i=1,2,3$ and $a+b+c\not= 0$. 

The algorithm to establish those linear combinations is to find sets $\{(a,b,c):a+b+c=1\}$  
say $\{(1,1,-1), (1,3,-2),\ldots\},$ etc. Denote $s_i=(x_i,y_i,z_i)$ $(i=1,2,3)$. Then we test all obtained solutions to find those distinct linear combinations (remember $a+b+c=1$) 

\[
as_1+bs_2+cs_3=(ax_1+bx_2+cx_3, ay_1+by_2+cy_3, az_1+bz_2+cz_3)
\]
such that $ax_1+bx_2+cx_3, ay_1+by_2+cy_3, az_1+bz_2+cz_3>0$ because any solution of the form $as_1+bs_2+cs_3$ with one zero component has been obtained already. 

As the simplest linear combination, we choose $(a,b,c)=(1,-1,1)$ and calculate 

\[
s_1-s_2+s_3=(x_1-x_2+x_3, y_1-y_2+y_3, z_1-z_2+z_3)
\]
such that $x_1-x_2+x_3, y_1-y_2+y_3, z_1-z_2+z_3>0$.

We now present the following conjecture for further discussion of finding all of nonnegative solutions of Diophantine equation \eqref{0.0}.

\medbreak

{\bf Conjecture:} For any solution $(\hat x,\hat y,\hat z)$ of $px+qy+lz=n$ with $\gcd(p,q,l)=1,$ there exist solutions $s_i,$ $i=1,2,3,$ of either $qy+lz=n,$ $px+lz=n,$ or $px+qy=n$ such that 

\be\label{conjecture}
(\hat x,\hat y,\hat z)=s_1-s_2+s_3,
\ee
where $s_i\in \widehat S_i,$ $i=1,2,3,$ in the last expression do not need to be different. Hence, if all three $s_i,$ $i=1,2,3,$ are the same, \eqref{conjecture} presents itself.
\medbreak

We will illustrate that the conjecture is reasonable by using some examples. Meanwhile, we will give an algorithm to apply \eqref{conjecture} to construct all of the solutions of the Diophantine equations in the examples.

We say the set $\widehat S_j$ is the smallest set of the collection $\{ \widehat S_i:i=1,2,3\},$ if the cardinal number $|\widehat S_j|=\min\{|\widehat S_1|, |\widehat S_2|, |\widehat S_3|\}$. We say the set $\widehat S_k$ is the next smallest set of the collection $\{ \widehat S_i:i=1,2,3\},$ if the cardinal number $|\widehat S_k|=\min\{|\widehat S_1|, |\widehat S_2|, |\widehat S_3|\}\backslash \{|\widehat S_j|\}$. We say $(x_{i},y_{i},z_{i})$ is the smallest element of a set $\widehat S_j=\{(x_{j},y_{j},z_{j}):j=1,2,\ldots\},$ if $x_{i}+y_{i}+z_{i}$ is the smallest possible number in the set $S_j$. Assume $p< q< \ell$ in \eqref{0.0}. Then, in general, $\widehat S_1$ is the smallest set and $\widehat S_2$ is the next smallest set. One can also check that this is the case, in equations \eqref{S3}, \eqref{S2}, and \eqref{S3}.  Our algorithm can be described based on Proposition \ref{pro:3.1} as follows:

\begin{enumerate}
\item [Step 1] Determine the smallest element of the smallest set.
\item [Step 2] Use the smallest element $(s_{11},s_{12},s_{13})$ determined in the first step to subtract all of the elements in the next smallest set, $\widehat S_2,$ provided that the differences of the third components are positive.
\item [Step 3] Add the resulting elements obtained in the second step to all of the elements in $\widehat S_3$, provided the resulting sums for all components are positive. 
\item [Step 4] The union of the sets $\widehat S_i,$ $i=1,2,3$, and those obtained in the third step consist of the whole solution set of \eqref{0.0}.
\end{enumerate}

As an example, we now use this algorithm to find the remaining part of the solution set of $x+2y+3z=14,$ besides those $14$ solutions shown in \eqref{S4}. Based on our definition, $(0,1,4)$ is the smallest element of the smallest set $\widehat S_3$ in \eqref{S3}. We use $(0,1,4)$ to subtract all of those elements in the next smallest set, $\widehat S_2$, in \eqref{S2}, such that the third components are positive, namely, 

\begin{align*}
(0,1,4)-(5,0,3)&=(-5,1,1), (0,1,4)-(8,0,2)=(-8,1,2),\\
(0,1,4)-(11,0,1)&=(-11,1,3), (0,1,4)-(14,0,0)=(-14,1,4).
\end{align*}
Following the algorithm, we add $(-5,1,1)$ to $(6,4,0), (8,3,0), (10,2,0),$ $(12,1,0),$ and $(14,0,0)$ in \eqref{S1}  to obtain the following $5$ solutions of $x+2y+3z=14$:

\[
(1,5,1), (3,4,1), (5,3,1), (7,2,1),\,\, \mbox{and} \,\, (9,1,1).
\]
By adding $(-8,1,2)$ to $(10,2,0), (12,1,0),$ and $(14,0,0)$ in \eqref{S1}, we obtain 
the following $3$ solutions of $x+2y+3z=14$:

\[
(2,3,2), (4,2,2),\,\, \mbox{and}\,\, (6,1,2).
\]
Adding $(-11,1,3)$ to $(12,1,0)$ and $(14,0,0)$ in \eqref{S1}, we get the $2$ more solutions of $x+2y+3z=14$ as 

\[
(1,2,3)\,\,\mbox{and}\,\, (3,1,3).
\]
Adding $(-14,1,4)$ to any element in \eqref{S1}, we can not get an element with positive components, so we can not get any more solutions. Thus, we obtain all $24$ solutions of $x+2y+3z=14$ by combining the above $10$ solutions and the $14$ solutions shown in \eqref{S4}. 

We use the exhaustive method to check our results and find 

\begin{align*}
S(0)&=\{ (x,y,0):x+2y=14\}=\{(0,7,0), (2,6,0), (4,5,0), (6,4,0),\\
& \quad\,\, (8,3,0), (10,2,0),  (12,1,0), (14,0,0)\},\\
S(3)&=\{(x,y,1):x+2y=14-3=11\}\\
&=\{ (11,0,1), (9,1,1),(7,2,1),(5,3,1),(3,4,1),(1,5,1)\}\\
S(6)&=\{(x,y,2):x+2y=14-6=8\}\\
&=\{(8,0,2), (6,1,2), (4,2,2), (2,3,2), (0,4,2)\}\\
S(9)&=\{(x,y,3):x+2y=14-9=5\}=\{(5,0,3), (3,1,3),(1,2,3)\}\\
S(12)&=\{(x,y,4):x+2y=14-12=2\}=\{(2,0,4),(0,1,4)\},
\end{align*}
where 

\begin{align*}
|S(0)|&=8=p_{1,2}(14),\quad  |S(3)|=6=p_{1,2}(11), \quad |S(6)|=5=p_{1,2}(8), \\
|S(9)|&=3=p_{1,2}(5),\quad |S(12)|=2=p_{1,2}(2).
\end{align*}

Our algorithm is efficient for solving many Diophantine equations. However, the algorithm may not work for some Diophantine equations \eqref{0.0} although we think the expression in \eqref{conjecture} can still be applied to find all of the solutions of \eqref{0.0}, with some modification of the above algorithm. Here is an example. Consider the Diophantine equation 

\be\label{finalex}
5x+7y+11z=71.
\ee

For reducing equation $5x+7y=71,$ from $5a_1\equiv 71 \equiv \modd{1} {7}$ and $7b_1\equiv 71 \equiv \modd{1} {5}$,   we obtain $a_1=3$ and $b_1=3$. Thus, 
\[
M=\frac{1}{35}(71-7\cdot 3-5\cdot 3)=1,
\]
which implies $5x+7y=71$ has $1+M=2$ nonnegative solutions $\widehat S_3=\{(7i+3,5(M-i)+3,0):i=0,1\}=\{ (3,8,0), (10,3,0)\}$. Similarly, for equation $7y+11z=71,$ the corresponding $a_1=7,$ $b_1=2,$ and 
\[
M=\frac{1}{77}(71-7\cdot 7-11\cdot 2)=0,
\]
which implies $7y+11z=71$ has $1+M=1$ nonnegative solution $\widehat S_1=\{(0, 11i+7, 7(M-i)+2):i=0\}=\{(0,7,2)\}$. For equation $5x+11z=71,$ the corresponding $a_1=1,$ $b_1=1,$ and 
\[
M=\frac{1}{55}(71-5\cdot 1-11\cdot 1)=1,
\]
which implies $5x+11z=71$ has $1+M=2$ solutions $\widehat S_2=\{(11i+1,0, 5(M-i)+1)\}=\{(1,0,6),(12,0,1)\}$. Denote 

\[
\widehat S=\cup^3_{i=1}\widehat S_i=\{(0,7,2), (1,0,6), (12, 0,1), (3,8,0), (10,3,0)\}.
\]
Then we pick up some three different elements from the five elements of the set $\widehat S$ to find other solutions of \eqref{finalex} by using \eqref{conjecture}. For instance, 

\begin{align*}
(1,0,6)-(0,7,2)+(3,8,0)&=(4,1,4),\\
(10,3,0)-(3,8,0)+(0,7,2)&=(7,2,2),\\
(12,0,1)-(10,3,0)+(0,7,2)&=(2,4,3),\\
(12,0,1)-(10,3,0)+(3,8,0)&=(5,5,1).
\end{align*}
By using Theorem \ref{thm:1.2}, we may find equations $5x+7y=71-11z$ have $2,$ $2,$ $2,$ $1,$ $1,$ $0,$ and $1$ nonnegative solutions for $z=0,1,2,3,4,5,$ and $6,$ respectively. Hence, $5x+7y+11z=71$ has $9$ nonnegative solutions, and all of them can be presented by using \eqref{conjecture}. It worth mentioning that for $z=5,$ $5x+7y=71-11\cdot 5=16$ has no nonnegative solution from Theorem \ref{thm:1.2}, since $M(5)=(16-5a_1(5)-7b_1(5))/35=(16-5\cdot 6-7\cdot 3)/35<0$. 

Here, we have two comments. First, some three elements may not give a nonnegative solution of \eqref{finalex} by using 
\eqref{conjecture}, say $(12,0,1)-(0,7,2)+(3,8,0)=(13,1,-1)$. Secondly, it is clear that each element in $\widehat S$ can also be presented by using \eqref{conjecture}. Hence we have the following 
consequence of the conjecture:
\medbreak

{\bf Consequence of the conjecture:} Denote $|\widehat S_i|=N_i$ and $\hat N=N_1+N_2+N_3$. Then the number of solutions of \eqref{0.0} has bounds

\[
0\leq N(p,q,\ell,n)\leq 3\binom{\hat N}{3},
\]
which is implied by the conjecture if it is true.

\end{document}